\title{The Hilbert Schmidt version of the commutator theorem for zero trace
  matrices
  \thanks{AMS subject classification: 47B47, 15A60.
    Key words: commutators, zero trace, Hilbert Schmidt norm of matrices}} 
\author{Omer Angel\thanks{Supprted in part by NSERC, the Isaac Newton
    Institute and Simons Foundation.}
  \and Gideon Schechtman\thanks{Supported in part by the Israel Science
    Foundation.}
}
\date{March 2015}
\renewcommand*{\@fnsymbol}[1]{\ensuremath{\ifcase#1\or {}\or \dagger\or \ddagger\else\@arabic{#1}\fi}}
\newtheorem{thm}{Theorem}
\newtheorem{lm}{Lemma}
\def\R{{\mathbb R}}
\def\Z{{\mathbb Z}}
\def\E{\mathbb{E}}
\newcommand{\Tr}{\mathrm{Tr}}
\begin{document}
\maketitle

\begin{abstract}
  Let $A$ be a $m\times m$ complex matrix with zero trace. Then there are
  $m\times m$ matrices $B$ and $C$ such that $A=[B,C]$ and $\|B\|\|C\|_2\le
  (\log m+O(1))^{1/2}\|A\|_2$ where $\|D\|$ is the norm of $D$ as an
  operator on $\ell_2^m$ and $\|D\|_2$ is the Hilbert--Schmidt norm of
  $D$. Moreover, the matrix $B$ can be taken to be normal. Conversely there
  is a zero trace $m\times m$ matrix $A$ such that whenever $A=[B,C]$,
  $\|B\|\|C\|_2\ge |\log m-O(1)|^{1/2}\|A\|_2$ for some absolute constant
  $c>0$.

\end{abstract}

\section{Introduction}
As is well known (or see e.g. [Fi]) a complex $m\times m$ matrix $A$ is a
commutator (i.e., there are matrices $B$ and $C$ of the same dimensions as
$A$ such that $A=[B,C]=BC-CB$) if and only if $A$ has zero trace. Let
$\|\E\|$ denote the operator norm of an $m\times m$ matrix (as a map
$E:\ell_2^m\to\ell_2^m$) and let $|\cdot|$ be any other norm on the space
of $m\times m$ matrices satisfying $|EF|\le \|E\||F|$ and $|FE|\le
\|E\||F|$ for all $m\times m$ matrices. In such a situation clearly if
$A=[B,C]$ then $|A|\le 2\|B\||C|$.

We are interested in the reverse inequality: If $A$ has zero trace are
there $m\times m$ matrices $B$ and $C$ such that $A=[B,C]$ and $\|B\||C|\le
K\|A\|$ for some absolute constant $K$? If not what is the behavior of the
best $K$ as a function on $m$?

In \cite{jos} this question was dealt with for $|\cdot|$ being the operator
norm $\|\cdot\|$. An upper bound on $K$ which is smaller than any power of
$m$ was given.

Here we deal with $|\cdot|$ being the Hilbert--Schmidt norm which we denote
$\|\cdot\|_2$.  We give matching upper and lower bounds (up to a constant
factor). 

\begin{thm}\label{thm:1}
  Let $A$ be an $m\times m$ matrix with zero trace, then there are $m\times
  m$ matrices $B$ and $C$ such that $A=[B,C]$ and $\|B\|\|C\|_2\le (c+\log
  m)^{1/2}\|A\|_2$. Moreover, the matrix $B$ can be taken to be
  normal. Conversely, for each $m$ there is a zero trace $m\times m$ matrix
  $A$ such that for any $m\times m$ matrices $B,C$ with $A=[B,C]$,
  $\|B\|\|C\|_2\ge \frac12(c'+\log m)^{1/2}\|A\|_2$, where $c,c'$ are some
  universal constants.
\end{thm}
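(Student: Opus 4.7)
The plan has two essentially independent parts.

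For the upper bound, my plan is to use a randomized diagonal construction. First, by a Fillmore-type theorem (every zero-trace matrix is unitarily equivalent to a zero-diagonal matrix), choose a unitary $U$ so that $\tilde A := U^*AU$ has zero diagonal; since unitary conjugation preserves both norms, it suffices to decompose $\tilde A$. Next, fix $m$ points $\beta_1,\dots,\beta_m$ in the closed unit disk arranged roughly on a $\sqrt m \times \sqrt m$ grid with minimum spacing $\sim 1/\sqrt m$. The critical-exponent $2$D Riesz energy satisfies $\sum_{k\ne l}1/|\beta_k-\beta_l|^2 = O(m^2\log m)$, which is exactly where the $\log m$ enters. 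Draw $\pi\in S_m$ uniformly at random, put $B := \mathrm{diag}(\beta_{\pi(1)},\dots,\beta_{\pi(m)})$ and $C_{ij} := \tilde A_{ij}/(\beta_{\pi(i)}-\beta_{\pi(j)})$ for $i\ne j$, $C_{ii}=0$. Then $[B,C]=\tilde A$, $\|B\|\le 1$, and
\[
\mathbb{E}_\pi\|C\|_2^2 \;=\; \frac{\|\tilde A\|_2^2}{m(m-1)}\sum_{k\ne l}\frac{1}{|\beta_k-\beta_l|^2} \;=\; O(\log m)\,\|A\|_2^2.
\]
A $\pi$ attaining the expectation, conjugated back by $U$, yields the desired decomposition with a normal $B$.

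For the lower bound, I would take the rank-one perturbation $A := I - m\, e_m e_m^*$, which has trace zero and $\|A\|_2^2 = m(m-1)$. If $A=[B,C]$ with $B$ normal and $\|B\|\le 1$, diagonalize $B = VDV^*$. The identity $\mathrm{tr}(A p(B)) = 0$ for every polynomial $p$, a direct consequence of $A=[B,C]$, forces the diagonal of $V^*AV$ to vanish. A short computation shows $V^*AV = I - m ff^*$ with $f=V^*e_m$, and the zero-diagonal condition then forces $|f_i|^2 = 1/m$ for every $i$; hence every off-diagonal entry of $V^*AV$ has modulus exactly $1$, giving
\[
\|C\|_2^2 \;\ge\; \sum_{i\ne j}\frac{|(V^*AV)_{ij}|^2}{|b_i-b_j|^2} \;=\; \sum_{i\ne j}\frac{1}{|b_i-b_j|^2}.
\]
A matching lower bound on the $2$D Riesz-$2$ energy, obtained by partitioning the unit disk into $O(1/r^2)$ cells of diameter $r$ and applying Cauchy--Schwarz to give $\sum_k n_k^2 \ge m^2/\#\text{cells}$, shows $\sum_{i\ne j}1/|b_i-b_j|^2 \ge c\, m^2\log m$ for any $m$ distinct points in the disk, which completes the normal case.

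The main obstacle is extending the lower bound to possibly non-normal $B$. I would attack this through the Schur decomposition $B = QTQ^*$ with $T = D+N$ upper triangular ($D$ diagonal with the eigenvalues $d_i$, and $N$ strictly upper triangular). The commutator relation $[T,\tilde C] = Q^*AQ$ then unfolds into an off-diagonal system $(d_i-d_j)\tilde C_{ij} + [N,\tilde C]_{ij} = (Q^*AQ)_{ij}$ together with a coupled equation on the diagonal. The plan is to show, using the special rank-one structure of $A-I$ together with $\|N\|\le 2\|B\|$, that the nilpotent perturbation can only alter $\|\tilde C\|_2$ by a bounded factor, preserving the $\sqrt{\log m}$ lower bound up to the constant $1/2$ appearing in the statement. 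This perturbative reduction to the normal case is where the substantive technical work of the lower bound lies.
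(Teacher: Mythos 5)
Your upper bound is essentially the paper's argument: reduce to zero diagonal via Fillmore, take a random permutation of $m$ well-separated points as the diagonal of $B$, and average $\|C\|_2^2$ using the $O(m^2\log m)$ bound on the Riesz-$2$ energy. That part is correct and takes the same route.

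The lower bound, however, has a genuine gap, and it is exactly the part you flag as ``the substantive technical work.'' Your argument for normal $B$ is salvageable --- though note that $\mathrm{tr}(Ap(B))=0$ for all polynomials only forces the diagonal of $V^*AV$ to vanish after you know the eigenvalues of $B$ are distinct; the cleaner route is to observe that if $B=VDV^*$ then $[D,V^*CV]=V^*AV$ forces \emph{all entries} of $V^*AV$ within each eigenblock of $D$ to vanish, which for $V^*AV = I - m\,ff^*$ forces $|f_i|^2=1/m$ for every $i$ and hence distinct eigenvalues. But the reduction of the general case to the normal case by treating the nilpotent part $N$ in the Schur form $B=Q(D+N)Q^*$ as a bounded perturbation does not work as stated. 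The operator $X\mapsto [N,X]$ is not small relative to $X\mapsto [D,X]$ in any sense that controls $\|C\|_2$; worse, $D$ can be highly degenerate or even zero ($B$ nilpotent), in which case there is no ``normal part'' to perturb around and the commutator equation $[N,\tilde C]=\tilde A$ has a completely different structure. No amount of exploiting the rank-one form of $A-I$ will make $[N,\cdot]$ a controlled perturbation of $[D,\cdot]$ uniformly over all $B$ with $\|B\|\le 1$.

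The paper avoids this entirely by not attempting a spectral/Schur reduction. Instead it uses a variant of Brown's lemma: starting from the one-dimensional range of $A+\frac1m I$, one builds a flag of subspaces $H_0,H_1,\dots$ with $\dim H_n\le n+1$ such that both $B$ and $C$ are block-tridiagonal with respect to this flag. Taking traces of the diagonal blocks of $[B,C]$ telescopes to
\[
1-\tfrac{1}{m}\sum_{k=0}^{n}\mathrm{rank}\,P_k
\;\le\; \|P_{n+1}CP_n\|_1 + \|P_nCP_{n+1}\|_1,
\]
and the low-rank blocks then give the Hilbert--Schmidt lower bound after summing over $n$. This works for arbitrary $B$, makes no normality or diagonalizability assumption, and is the ingredient your proposal is missing. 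If you want to stay closer to your energy-of-eigenvalues picture, you would need an argument that does not route through the Schur form; as it stands, the non-normal case is an open hole in the proof.
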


The proof of the upper bound which is done by quite a simple random choice
is given in Section \ref{sec:upper}. The lower bound is a bit more involved
and is based on an idea from \cite{dfww} and in particular on a variation
on a lemma of Brown \cite{br} . The proof is given in Section
\ref{sec:lower}.

\section{The upper bound}\label{sec:upper}

Since both norms $\|\cdot\|$ and $\|\cdot\|_2$ are unitarily invariant and
since any zero trace matrix is unitarily equivalent to a matrix with zero
diagonal, we may and shall assume that $A$ has zero diagonal. In that case
we shall find a diagonal matrix $B=\Delta(b_1,b_2,\dots,b_m)$ with the
desired property. Note that translating back and assuming $A$ has merely
zero trace, the resulting $B$ is normal, being unitarily equivalent to a
diagonal matrix. 

Let $m>1$. If $A=[B,C]$ with $A$ with zero diagonal and
$B=\Delta(b_1,b_2,\dots,b_m)$ with all its diagonal entries distinct, then
necessarily $c_{i,j} = \frac{a_{i,j}}{b_i-b_j}$ for $i\not= j$.

Let $G\subset\Z+\Z\imath$ be the points with $m$ smallest absolute
values, so that $\max_{z\in G} \{|z|\} \leq 1+\sqrt{m/\pi}$.  Let
$\{b_i\}_{i=1}^m$ be a uniformly random permutation of these $m$ points, so
that necessarily $\|B\|\le 1 + \sqrt{m/\pi}$.  We now evaluate the
expectation of the resulting $\|C\|_2^2$.
\begin{equation}\label{eq:normofC}
\E\|C\|_2^2 = \E\sum_{i\not=j}\frac{|a_{i,j}|^2}{|b_i-b_j|^2} =
\sum_{i\not=j}|a_{i,j}|^2\E\frac{1}{|b_i-b_j|^2} = \|A\|_2^2
\E\frac{1}{|b_1-b_2|^2}.
\end{equation}
To evaluate $\E\frac{1}{|b_1-b_2|^2}$ fix $b_1\in G$. The expectation
conditioned on $b_1$ is
\begin{align*}
  \frac{1}{m-1} \sum_{\substack{b_2\in G\\b_2\neq b_1}} \frac{1}{|b_1-b_2|^2}
  &\le \frac{1}{m-1} \left[a_0 + \iint_{1\leq|z-b_1|\leq 2\sqrt{m/\pi}}
    \frac{|dz|^2}{|z-b_1|^2} \right] \\
  &\le \frac{a_1 + \pi\log m}{m}
\end{align*}
for some absolute constants $a_0,a_1$.

Plugging this into (\ref{eq:normofC}) we get that $\E\|C\|_2^2\le
\frac{a_1+\pi\log m}{m}\|A\|_2^2$ and thus there is a realization of the $b_i$-s
which gives $\|B\|\|C\|_2\le \sqrt{c+\log m} \|A\|_2$, for some absolute
constant $c$, as desired.

\paragraph{Remark.}
One can clearly replace the $m$ points of $G$ by another set of points in
the same disc about zero.  Sets minimizing such an energy function are a
well studied subject.  However, no significant improvement can be gained by
replacing $G$ with another set, and in particular our choice of $G$
achieves the optimal leading term $\pi m\log m$.  See for example \cite{hs}
in which tight bounds are given for a related quantity on the two
dimensional sphere.



\section{The lower bound}\label{sec:lower}

We begin with a Lemma which is a variation on a lemma of Brown \cite{br}

\begin{lm}\label{lm:brown}
  Assume $S,T$ are $m\times m$ matrices, $m\le \infty$, and $M$ is a finite
  dimensional subspace of $\ell_2^m$ (where $\ell_2^\infty=\ell_2$) such
  that for some $\lambda\in \mathbb{C}$\\ $([S,T]+\lambda
  I)(\ell_2^m)\subseteq M$. Then there are orthogonal subspaces
  $H_n\subseteq \ell_2^m$, $n=0,1,\dots$, with $H_0=M$,
  $\rm{dim}H_n\le(n+1)\rm{dim}M$, $n=1,2,\dots$, and $P_iSP_j=P_iTP_j=0$
  for all $i>j+1$, $j=0,1,\dots$. Here $P_l$ is the orthogonal projection
  onto $H_l$.  Moreover, $\sum_{n=0}^\infty\oplus H_n$ is invariant under
  $S$ and $T$.
\end{lm}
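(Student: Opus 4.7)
The plan is to build an ascending filtration of $\ell_2^m$ by the orbits of $M$ under $S$ and $T$, and then to take $H_n$ to be the orthogonal successive differences. The commutator hypothesis will be used to keep the growth of each level linear in $n$ rather than exponential. Setting $N := [S,T] + \lambda I$, so that $N(\ell_2^m) \subseteq M$ by hypothesis and $TS = ST + \lambda I - N$, I would define
\[
V_n := \operatorname{span}\bigl\{S^a T^b M : a, b \ge 0,\ a+b \le n\bigr\}, \qquad n \ge 0.
\]
Then $V_0 = M$, $V_{n-1} \subseteq V_n$, and counting generators of total degree exactly $n$ gives $\dim V_n - \dim V_{n-1} \le (n+1)\dim M$.

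The key algebraic step is to verify that $SV_n \subseteq V_{n+1}$ and $TV_n \subseteq V_{n+1}$. The first inclusion is immediate from the definition. For the second, iterating $TS = ST + \lambda I - N$ yields, for every $a \ge 1$,
\[
T S^a = S^a T + a\lambda S^{a-1} - \sum_{k=0}^{a-1} S^k N S^{a-1-k},
\]
and since $\operatorname{range}(N) \subseteq M$, each summand $S^k N S^{a-1-k}$ has range in $S^k M \subseteq V_k$. Applied to any generator $S^a T^b x$ with $x \in M$ and $a+b \le n$, every term on the right lies in $V_{n+1}$, giving $T V_n \subseteq V_{n+1}$.

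Finally, set $H_0 := M$ and $H_n := V_n \ominus V_{n-1}$ for $n \ge 1$. Pairwise orthogonality of the $H_n$ and the bound $\dim H_n \le (n+1)\dim M$ are then immediate, and $V_n = H_0 \oplus \cdots \oplus H_n$ as an orthogonal direct sum. The staircase relations $P_i S P_n = P_i T P_n = 0$ for $i > n+1$ follow from $SH_n, TH_n \subseteq SV_n + TV_n \subseteq V_{n+1}$, and the same containment shows that $\bigoplus_n H_n = \bigcup_n V_n$ is invariant under $S$ and $T$. The main obstacle is precisely the dimension estimate: a naive construction of the smallest $S,T$-invariant subspace containing $M$ by iterating $K_n = K_{n-1} + SK_{n-1} + TK_{n-1}$ would give exponential growth in $n$. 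The commutator hypothesis is exactly what lets us sort each word in $S,T$ into the normal form $S^a T^b$ modulo lower-degree terms and operators with range in $M = H_0$, cutting the bound on $\dim H_n$ down to the $n+1$ monomials of total degree $n$.
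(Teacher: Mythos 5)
Your proposal is correct and follows essentially the same route as the paper: the filtration $V_n=\operatorname{span}\{S^aT^bM:a+b\le n\}$, the orthogonal differences $H_n=V_n\ominus V_{n-1}$, and the commutation identity for $TS^a$ to push $T$ past powers of $S$ while producing only lower-degree terms and terms with range in $M$. In fact your bookkeeping of signs (via $N=[S,T]+\lambda I$ and $TS=ST+\lambda I-N$) is cleaner than the paper's, which writes $[T,S]+\lambda I$ where the hypothesis gives range control over $[S,T]+\lambda I$, a harmless sign slip.
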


\begin{proof}
  Let $V_0=H_0=M$ and for $n\ge 1$ let $V_n$ be the linear span of
  $\{S^kT^lM; k+l\le n\}$. For $n\ge 1$ put $H_n=V_n\ominus
  V_{n-1}$. Clearly, $\rm{dim}H_n\le(n+1)\rm{dim}M$ and
  $\sum_{n=0}^\infty\oplus H_n$ is invariant under $S$ and $T$. To show
  that $P_iSP_j=P_iTP_j=0$ for all $i>j+1$ it is enough to show that
  $TV_n\subseteq V_{n+1}$ and $SV_n\subseteq V_{n+1}$ for all $n$.

The second containment is obvious. To prove the first it is enough to show
that for all $k\ge 1$ and $k+l\le n$, $TS^kT^l M \subseteq V_{n+1}$. Now,
\begin{align*}
  T S^k T^l &= S^k T^{l+1} + \sum_{i<k} S^i[T,S] S^{k-i-1} T^l \\
  &= S^k T^{l+1} - k\lambda S^{k-1} T^l + \sum_{i<k} S^i([T,S]+\lambda I)
  S^{k-i-1} T^l. 
\end{align*}
Now, the first term here has range in $V_{n+1}$ and the second in
$V_{n-1}\subseteq V_{n+1}$.  Since $[T,S]+\lambda I$ has range in $M$ the
$i$th term in the last sum has range in $S^i M \subseteq V_i \subseteq
V_{n+1}$, and the proof is complete. 
\end{proof}

Let $P$ be the rank one orthogonal projection onto the first coordinate in
$\ell_2^m$, $m<\infty$, given by the matrix
\[
P = \begin{pmatrix}
  1&0&\dots&0\\0&0&\dots&0\\\vdots&\vdots&\ddots&\vdots\\0&0&\dots&0
\end{pmatrix}
\]
and let $A=P-\frac{1}{m}I$. Obviously $A$ has zero trace and
Hilbert--Schmidt norm $\sqrt{1-\frac{1}{m}}$.  We now show that this $A$
gives the lower bound of Theorem~\ref{thm:1}.  Moreover, our argument gives
bounds on the leading singular values of $C$, based on the proof of Theorem
7.3 in \cite{dfww}, which also gives a lower bound on $\|C\|_2$.
Specifically, we get the following:

\begin{thm}\label{thm:lower}
Assume $A=[B,C]$ with $A$ as above, and the operator norm of $B$ equals
$1$.  Denote the singular values of $C$ as $s_1,s_2,\dots,s_m$, arranged in
non-increasing order.  Then for all $l\le m$,
\[
\sum_{i=1}^l s_i\ge \sqrt{l}/6.
\]
In particular the Hilbert--Schmidt norm of $C$ is at least $c\sqrt{\log m}$
for some absolute constant $c>0$.
\end{thm}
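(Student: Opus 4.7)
The plan is to apply Lemma~\ref{lm:brown} to $S=B$, $T=C$, $\lambda=1/m$, and $M=\mathbb{C}e_1$, which is valid since $[B,C]+(1/m)I=P$ has range in $M$. The lemma yields orthogonal subspaces $H_0=M,H_1,\ldots$ with $\dim H_n\le n+1$ and the block Hessenberg property $P_iBP_j=P_iCP_j=0$ for $i>j+1$, where $P_n$ is the projection onto $H_n$. Since $K=\bigoplus_n H_n$ is invariant under $B$ and $C$ (and hence under $A$), and $A|_K$ is simultaneously a commutator of finite-dimensional operators (so traceless) and equal to $e_1e_1^*-(1/m)I_K$, taking traces together with $e_1\in K$ forces $\dim K=m$, so $K=\ell_2^m$. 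Write $Q_n=\sum_{k\le n}P_k$ and $F_n=Q_n\ell_2^m$.

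The key idea is to evaluate $\Tr(Q_nAQ_n)$ in two ways. Directly, $\Tr(Q_nAQ_n)=1-\dim F_n/m$. On the other hand, $[Q_nBQ_n,Q_nCQ_n]$ is traceless, and the Hessenberg property forces $(I-Q_n)CQ_n=P_{n+1}CP_n$ and $(I-Q_n)BQ_n=P_{n+1}BP_n$; cyclic rearrangement of the boundary leakage then gives
\[
\Tr(Q_nAQ_n)=\Tr_{H_n}\!\bigl((P_nBP_{n+1})(P_{n+1}CP_n)\bigr)-\Tr_{H_n}\!\bigl((P_nCP_{n+1})(P_{n+1}BP_n)\bigr).
\]
Summing for $n=0,\ldots,n_0$, the right hand side collapses to $\Tr(UV)-\Tr(U'V')$ with $U=\sum_n P_nBP_{n+1}$, $V=\sum_n P_{n+1}CP_n$, and $U',V'$ the analogues with $B$ and $C$ interchanged. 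Each of $U,V,U',V'$ is an off-diagonal bipartite pinching. The operator norm bounds $\|U\|,\|V'\|\le\|B\|=1$ follow from disjointness of the domains and ranges of the constituent blocks, while a doubling argument (embedding as the $(2,1)$-block of an operator on $\ell_2^m\oplus\ell_2^m$ and pinching by the orthogonal projections $P_n\oplus P_{n+1}$, augmented by $0\oplus P_0$) shows $s_i(V),s_i(U')\le s_i(C)$ for all $i$. Since $U,V'$ have rank at most $\dim F_{n_0}$, von Neumann's trace inequality gives $|\Tr(UV)|,|\Tr(U'V')|\le\sum_{i=1}^{\dim F_{n_0}}s_i(C)$.

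Combining with the direct trace computation, provided $\dim F_{n_0}\le m/2$,
\[
2\sum_{i=1}^{\dim F_{n_0}}s_i(C)\ge\sum_{n=0}^{n_0}(1-\dim F_n/m)\ge(n_0+1)/2,
\]
so $\sum_{i=1}^{\dim F_{n_0}}s_i(C)\ge(n_0+1)/4\ge\sqrt{\dim F_{n_0}}/4$ via $\dim F_{n_0}\le(n_0+1)(n_0+2)/2\le(n_0+1)^2$. For arbitrary $l\le m$, taking $n_0=\max\{n:\dim F_n\le l\}$ and noting $\dim F_{n_0+1}-\dim F_{n_0}\le n_0+2=O(\sqrt l)$ shows that $\dim F_{n_0}$ is comparable to $l$; together with monotonicity of Ky Fan norms in the regime $l>m/2$ and the trivial estimate $s_1(C)\ge\|A\|/(2\|B\|)\ge 1/3$ for very small $l$, this yields $\sum_{i=1}^l s_i(C)\ge\sqrt l/6$. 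The Hilbert--Schmidt bound is then a consequence of weak majorization: $(s_i(C))$ dominates the decreasing sequence $t_i=(\sqrt i-\sqrt{i-1})/6\ge 1/(12\sqrt i)$, so convexity of $x\mapsto x^2$ gives $\|C\|_2^2\ge\sum_{i=1}^m t_i^2\ge(\log m)/144$. The principal technical obstacle is establishing the off-diagonal bipartite pinching majorization via the doubling trick, together with careful constant-tracking across the small, intermediate, and large $l$ regimes so that the clean coefficient $1/6$ is achieved.
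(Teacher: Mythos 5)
Your proposal follows the same backbone as the paper's argument---Lemma~\ref{lm:brown} with $\lambda=1/m$, the telescoping trace identity over the block Hessenberg decomposition, and a passage from that trace bound to Ky Fan norm bounds on $C$---but the final conversion step is carried out with a different tool. The paper invokes Lemma~7.9 of \cite{dfww} to produce partial isometries $V,W$ making $P_n(VC+WC^*)P_n$ positive with prescribed trace, then compresses by $E_k=\sum_{i\le k}P_i$ and applies Weyl's inequality, finally paying a factor $2$ to pass from $VC+WC^*$ back to $C$. You instead keep the raw quantities $\Tr(UV)$ and $\Tr(U'V')$ and bound them via von Neumann's trace inequality together with the fact that the off-diagonal pinching (your doubling construction $\tilde C\mapsto\sum_n R_n\tilde C R_n$ with $R_n=P_n\oplus P_{n+1}$, $R_{-1}=0\oplus P_0$) is a norm contraction. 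Both routes are legitimate; yours is somewhat more self-contained in that it avoids citing \cite{dfww}, at the modest cost of the extra case-splitting over $l$ that you defer.

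One intermediate claim is stated too strongly and should be weakened: it is \emph{not} true that $s_i(V)\le s_i(C)$ for every $i$. A pinching can raise individual singular values; for instance with $H_n=\mathbb{C}e_{n+1}$ in $\ell_2^3$ and
\[
C=\begin{pmatrix}0&0&0\\1&0&0\\1&1&0\end{pmatrix},\qquad
V=\sum_n P_{n+1}CP_n=\begin{pmatrix}0&0&0\\1&0&0\\0&1&0\end{pmatrix},
\]
one has $s_2(V)=1$ while $s_2(C)=(\sqrt5-1)/2<1$. What the doubling/pinching construction does give---because a pinching is a contraction in every unitarily invariant norm---is the Ky Fan inequality $\sum_{i\le r}s_i(V)\le\sum_{i\le r}s_i(C)$ (and likewise for $U'$), and this is exactly what your argument consumes: from $\|U\|\le1$ and $\mathrm{rank}\,U\le r$ one gets $|\Tr(UV)|\le\sum_{i\le r}s_i(V)\le\sum_{i\le r}s_i(C)$, and symmetrically for $\Tr(U'V')$. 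With that replacement the chain of estimates closes, so the slip is cosmetic rather than structural; the remaining small/intermediate/large--$l$ bookkeeping needed to land on the coefficient $1/6$ is routine but does need to be written out.
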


\begin{proof}[Proof of the lower bound in Theorem~\ref{thm:1}]
Let $M$ be the one dimensional subspace of $\ell_2^m$ spanned by the first
coordinate. Applying Lemma \ref{lm:brown} to this subspace with $S=B$,
$T=C$ and $\lambda=1/m$ we get orthogonal subspaces $M=H_0, H_1, \dots$
(which of course are eventually the zero subspace) with $\rm{dim}H_n\le
n+1$, so that $\sum_{n=0}^\infty \oplus H_n$ is invariant under $B$ and $C$
and $P_iBP_j=P_iCP_j=0$ for all $i>j+1>0$, where $P_l$ is the orthogonal
projection onto $H_l$.

Note that $P_0=P$.  Note also that $\sum_{n=0}^\infty \oplus H_n$ is
$\R^m$.  Indeed, a proper subspace of $\R^m$ containing $H_0$ which is
invariant under $B$ and $C$ is also invariant under $A$, and the
restriction of $A$ to such a subspace has zero trace which clearly can't
hold.

Now, on $H_0$, $A$ is just $(1-\frac1m)$, so 
\begin{align*}
\left(1-\frac{1}{m}\right)P_0 &= P_0[B,C]P_0\\
&= P_0BP_0P_0CP_0-P_0CP_0P_0BP_0\\
&\qquad +P_0BP_1P_1CP_0-P_0CP_1P_1BP_0.
\end{align*}
Similarly, for $k>0$, since $A_{|H_k}=\frac{-1}{m}I_{H_k}$, and using
$P_iBP_j=P_iCP_j=0$ for other $i,j$,
\begin{align*}
\frac{-1}{m}P_k &= P_k[B,C]P_k\\
&= P_kBP_{k-1}P_{k-1}CP_k-P_kCP_{k-1}P_{k-1}BP_k\\
&\qquad +P_kBP_kP_kCP_k-P_kCP_kP_kBP_k\\
&\qquad +P_kBP_{k+1}P_{k+1}CP_k-P_kCP_{k+1}P_{k+1}BP_k.
\end{align*}
Using the trace property (e.g., $\Tr(P_kBP_{k-1}P_{k-1}CP_k) =
\Tr(P_{k-1}CP_kP_kBP_{k-1}$)), we get that for all $n$,
\begin{align*}
1-\frac{1}{m}\sum_{k=0}^n{\rm rank} P_k
&= \sum_{k=0}^n \Tr(P_k[B,C]P_k)\\
&= \Tr(P_nBP_{n+1}CP_n)-\Tr(P_nCP_{n+1}BP_n).
\end{align*}
So, since $\|B\|=1$,
\begin{equation}\label{eq:trace}
1-\frac{1}{m}\sum_{k=0}^n{\rm rank}P_k
\le \|P_{n+1}CP_n\|_1 + \|P_nCP_{n+1}\|_1.
\end{equation}
Since ${\rm rank} P_k \leq k+1$, this gives a lower bound on the norms of
$P_nCP_{n+1}$ and $P_{n+1}CP_n$:
\begin{equation}
  \label{eq:normbd}
  \|P_{n+1}CP_n\|_1 + \|P_nCP_{n+1}\|_1 \geq 1 - \frac{1}{m} \binom{n+2}{2}.
\end{equation}
The matrices $P_{n+1}CP_n$ and $P_nCP_{n+1}$ have rank at most $n+1$, so
changing to other norms is not too costly, which allows us to bound from
below the Hilbert--Schmidt norm of $C$.

To complete the proof of the lower bound of Theorem~\ref{thm:1}, note that
for a matrix $M$ of rank $r$ we have $\|M\|_2^2 \geq \frac{1}{r}\|M\|_1^2$,
so
\begin{align*}
\|C\|_2^2 &\ge \sum_n \|P_n C P_{n+1}\|_2^2 + \|P_{n+1} C P_n\|_2^2 \\
&\ge \sum_n \frac{1}{n+1}
\left(\|P_n C P_{n+1}\|_1^2 + \|P_{n+1} C P_n\|_1^2\right) \\
&\ge \sum_n \frac{1}{2(n+1)}
\left(\|P_n C P_{n+1}\|_1 + \|P_{n+1} C P_n\|_1\right)^2 \\
&\ge \sum_n \frac{1}{2(n+1)} \left(1 - \frac{1}{m}\binom{n+2}{2}\right)^2.
\end{align*}
We take the sum over $n$ with $\binom{n+2}{2}<m$.  It is straightforward to
see that the last sum is $\frac14 \log m + O(1)$, giving the claimed lower
bound.
\end{proof}

\begin{proof}[Proof of Theorem~\ref{thm:lower}]
  Lemma 7.9 in \cite{dfww} (whose proof is simple, based on polar
  decomposition) says that there are partial isometries $V,W$ on $\ell_2^m$
  such that
\[
P_nVCP_n=|P_{n+1}CP_n| \ \  \mbox{and} \ \  P_nWC^*P_n=|P_{n+1}C^*P_n|.
\]
Consequently,
\[
P_n(VC+WC^*)P_n=|P_{n+1}CP_n| + |P_{n+1}C^*P_n|
\]
and by (\ref{eq:trace}),
\[
\Tr(P_n(VC+WC^*)P_n)\ge 1-\frac{1}{m}\sum_{k=0}^n{\rm rank}P_k.
\]
Fix a positive integer $k$ and let $E_k=\sum_{i=0}^kP_i$ and $r_k={\rm
  rank}E_k\le (k+1)(k+2)/2$. Denoting by $s_i(R)$ the singular values of
the operator $R$, we get that as long as $(k+1)(k+2)\le m$,
\begin{align*}
\sum_{i=1}^{(k+1)(k+2)/2}s_i(VC+WC^*)&\ge\sum_{i=1}^{r_k}s_i(E_k(VC+WC^*)E_k)\\
&\ge \sum_{n=0}^k \Tr(P_n(VC+WC^*)P_n)\ge \frac{k+1}{2}.
\end{align*}
Where we have used Weyl's inequality to deduce the second inequality. It
follows that for all $k$ as above $\sum_{i=1}^{(k+1)(k+2)/2}s_i(C)\ge
\frac{k+1}{4}$. The main assertion of the theorem follows easily from that.

As for the last assertion, it is well known that it follows from the
first. Indeed, the non-increasing sequence $s_1,s_2,\dots, s_m$ majorizes a
sequence equivalent (with universal constants) to $1,2/\sqrt2,1/\sqrt3,
\dots, 1/\sqrt{m}$. Consequently,
\[
(\sum_{i=1}^m s_i^2)^{1/2}\ge c(\sum_{i=1}^{m} 1/i)^{1/2}\ge c^\prime (\log m)^{1/2}.
\]
\end{proof}

\paragraph{Acknowledgement.} We benefitted a lot from discussions with Bill
Johnson concerning the material of this note. In particular, he is the one
who pointed \cite{dfww} to us.

%
%

\begin{tabular}{ll}
O. Angel                       &G. Schechtman\\
Department of Mathematics      &Department of Mathematics\\
University of British Columbia &Weizmann Institute of Science\\
Vancouver, BC, V6T 1Z2, Canada &Rehovot 76100, Israel\\
{\tt angel@math.ubc.ca}        &{\tt gideon@weizmann.ac.il}
\end{tabular}

\end{document}